\newcommand{\bC}{{\mathbf C}}
\newcommand{\bF}{{\mathbf F}}
\newcommand{\SSS}{\mathsf{S}}
\newcommand{\AAA}{\mathsf{A}}
\newcommand{\CCC}{\mathsf{C}}
\newcommand{\QQQ}{\mathsf{Q}}
\newcommand{\Aut}{{{\operatorname{Aut}}}}
\newcommand{\Irr}{{{\operatorname{Irr}}}}
\newcommand{\acd}{\operatorname{acd}}
\newcommand{\acs}{\operatorname{acs}}
\newcommand{\sol}{\operatorname{sol}}
\newcommand{\Ker}{\operatorname{Ker}}
\newtheorem{thm}{Theorem}[section]
\newtheorem{lem}[thm]{Lemma}
\newtheorem{que}[thm]{Question}
\newtheorem{exa}[thm]{Example}
\newtheorem*{thmA}{Theorem A}
\newtheorem*{conA'}{Conjecture A'}
\newtheorem*{thmB}{Theorem B}
\newtheorem*{conC}{Conjecture C}
\theoremstyle{definition}
\numberwithin{equation}{section}
\begin{document}

\title[Average character degree]{The average character degree of finite groups and Gluck's conjecture}

\author{Alexander Moret\'o}
\address{Departamento de Matem\'aticas, Universidad de Valencia, 46100
  Burjassot, Valencia, Spain}
\email{alexander.moreto@uv.es}

\thanks{We thank Gabriel Navarro for pointing out an inaccuracy in a previous version of Lemma 2.2 and the anonymous reviewers of an earlier version of this manuscript for many helpful comments.
Research  supported by Ministerio de Ciencia e Innovaci\'on (Grant PID2019-103854GB-I00 funded by MCIN/AEI/ 10.13039/501100011033)  and Generalitat Valenciana AICO/2020/298 and CIAICO/2021/163. }

\keywords{average character degree, solvable radical, Gluck's conjecture, character degrees}

\subjclass[2010]{Primary 20C15}


\begin{abstract}
We prove that the order of a finite group $G$ with trivial solvable radical is bounded above in terms of $\acd(G)$, the average degree of the irreducible characters. 
It is not true that the index of the Fitting subgroup is bounded above in terms of $\acd(G)$, but we show that in certain cases it is bounded in terms of the degrees of the irreducible characters of $G$ that lie over a linear character of the Fitting subgroup.  This leads us to propose a refined version of Gluck's conjecture.
\end{abstract}

\maketitle
  
 \section{Introduction}
 
 In the landmark paper \cite{gr}, R. Guralnick and G. Robinson studied the commuting probability in finite groups. As pointed out in \cite{gr}, if $G$ is a finite group, we may endow $G\times G$  with the structure of a probability space by assigning the uniform distribution.  The probability that a randomly chosen pair of elements of $G$ commute is $k(G)/|G|$, where $k(G)$ is the number of conjugacy classes of $G$ (see \cite{gus}). This number is called the commuting probability of $G$. Note that its inverse $|G|/k(G)$ is the average size of the conjugacy classes of $G$. We will write $\acs(G)=|G|/k(G)$. The main results in \cite{gr} can be reformulated in terms of $\acs(G)$, and we will do so when we refer to results in \cite{gr} in this note.

 Similarly, we write 
 $$
 \acd(G)=\frac{\sum_{\chi\in\Irr(G)}\chi(1)}{k(G)}
 $$
 to denote the average degree of the irreducible characters of $G$. This invariant was introduced in \cite{ilm}, motivated by a conjecture of K. Magaard and H. Tong-Viet \cite{mt}. 
 Since then, $\acd(G)$ has been studied in a number of papers (see, for instance, \cite{mn, qia, lew, ht1, hun, ht2}), but several fundamental questions remain open. The previously done work shows that working with the average character degree tends to be more difficult than working with the average class size. For instance, it was proved in Theorem 11 of \cite{gr} (see also the Addendum for references to P. Lescot papers where this result was proved first), that if $G$ is a finite group and $\acs(G)<12$ then $G$ is solvable. This theorem does not depend on the classification of finite simple groups. The analogous result for the average character degree is that if $\acd(G)<16/5$ then $G$ is solvable. This could only be proved in \cite{mn}, improving on earlier results in \cite{mt} and \cite{ilm},  and uses the classification of finite simple groups. (Note that if $\AAA_5$ is the alternating group on $5$ letters, then $\acs(\AAA_5)=12$ and $\acd(\AAA_5)=16/5$.) 
 In this article, we obtain a version for $\acd(G)$ of  another of the  results in \cite{gr}. We also point out  some results that do not admit an analog for $\acd(G)$, discuss  connections with Gluck's conjecture on the largest character degree and mention some questions that, unfortunately, we have not been  able to solve.  All groups in this paper will be finite. Our notation follows \cite{isa}.
 
By Theorem 8 of \cite{gr} (which does not depend on the classification of finite simple groups), if $G$ is a group with trivial solvable radical then $|G|$ is bounded from above in terms of $\acs(G)$. Using the CFSG, the bound was improved in Theorem 9 of \cite{gr} to $|G|\leq\acs(G)^2$.  The following is our  main result. 
 
 \begin{thmA}
 Let $G$ be a finite group with trivial solvable radical. Then $|G|$ is bounded from above in terms of $\acd(G)$.
 \end{thmA}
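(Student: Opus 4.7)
Let $N = \operatorname{Soc}(G)$. Since the solvable radical of $G$ is trivial, $N$ is a direct product $N = T_1 \times \cdots \times T_r$ of non-abelian finite simple groups; moreover $C_G(N)$ is normal in $G$ and meets $N$ trivially (as $Z(N)=1$), and since every minimal normal subgroup of $G$ is contained in $N$, this forces $C_G(N)=1$. Hence $G \hookrightarrow \Aut(N)$, and $|G|\le |\Aut(N)|$, so it suffices to bound $|N|$ as a function of $\acd(G)$.

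The main step is a Clifford-theoretic inequality comparing $\acd(G)$ with $\acd(N)$. For every $\theta \in \Irr(N)$ there is some $\chi \in \Irr(G)$ lying above $\theta$ with $\chi(1) \ge [G{:}I_G(\theta)]\,\theta(1)$; summing this minimum contribution over the $G$-orbits on $\Irr(N)$ yields
\[
\sum_{\chi \in \Irr(G)} \chi(1) \;\ge\; \sum_{\theta \in \Irr(N)} \theta(1) \;=\; k(N)\,\acd(N).
\]
Combined with Gallagher's classical bound $k(G) \le k(N)\,k(G/N)$, this produces $\acd(G) \ge \acd(N)/k(G/N)$, which is presumably the content of Lemma~2.2 (whose correction the author credits to Navarro). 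Since $G/N$ embeds in $\Out(N) = \prod_j \Out(T_{(j)}) \wr \Sym_{a_j}$---where $T_{(1)}, \ldots, T_{(s)}$ are the distinct isomorphism types of simple factors of $N$ with multiplicities $a_j$---and $\acd(N) = \prod_j \acd(T_{(j)})^{a_j}$, a further application of Gallagher to the kernel of the $G/N$-action on simple factors bounds $k(G/N)$ by a product in which each $a_j$ contributes only exponentially with base $|\Out(T_{(j)})|$, together with a subexponential term coming from the permutation part.

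The proof concludes using two CFSG-based facts about non-abelian finite simple groups $T$: (i) $\acd(T) > |\Out(T)|$ and (ii) $\acd(T) \to \infty$ as $|T| \to \infty$. Given (i), the ratio $\acd(N)/k(G/N)$ grows exponentially in each $a_j$ with a base greater than $1$, beating the subexponential contributions; so a bound $\acd(G) \le C$ forces each $a_j$ to be bounded. Then (ii) forces each $|T_{(j)}|$---and hence $|N|$ and $|G| \le |\Aut(N)|$---to be bounded. The main technical obstacle is ingredient (ii), a family-by-family growth statement for the average character degree that requires the classification: hook-length estimates for alternating groups, Deligne--Lusztig bounds on character degrees combined with $k(T) \le |T|^{o(1)}$ for groups of Lie type, and direct inspection for the sporadics. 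This accounts for the paper's stated reliance on CFSG.
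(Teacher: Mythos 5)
Your high-level reduction (socle $N$, $C_G(N)=1$, $G\hookrightarrow\Aut(N)$, bound $|N|$) matches the paper, and the inequality $\acd(G)\ge\acd(N)/k(G/N)$ obtained from Clifford theory plus P.~X.~Gallagher's bound $k(G)\le k(N)k(G/N)$ is correct. But from there you take a genuinely different route than the paper, and the route you take has a real gap in the estimate of $k(G/N)$.

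Your plan is to bound $k(G/N)$ by a factor $\prod_j|\Out(T_{(j)})|^{a_j}$ from the kernel of the permutation action, times a ``subexponential'' contribution from the image in $\prod_j\Sym_{a_j}$. That second piece is \emph{not} subexponential: a subgroup of $\Sym_m$ can have exponentially many conjugacy classes (e.g.\ $\CCC_2^{\lfloor m/2\rfloor}$ has $2^{\lfloor m/2\rfloor}$ of them; the Kov\'acs--Robinson bound for subgroups of $\Sym_m$ is of the form $c^m$ with $c=5^{1/3}\approx1.71$). Once that factor is exponential, your ingredient (i) $\acd(T)>|\Out(T)|$ is too weak: you would need something like $\acd(T)>c\,|\Out(T)|$ uniformly, and this fails already for $T=\AAA_5$, where $\acd(\AAA_5)=16/5=3.2$ while $c\,|\Out(\AAA_5)|\approx3.42$. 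So the product $\acd(N)/k(G/N)$ is not visibly bounded below by a quantity that blows up with $|N|$, and the argument does not close. (Incidentally, your guess about Lemma~2.2 is off: it is not the Clifford/Gallagher inequality you wrote, but an extendability criterion for a character $\alpha\times\beta$ of a product $M\times N$ of perfect normal subgroups, which is what the Navarro remark concerns.)

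The paper avoids this constant-chasing entirely by a different mechanism: instead of comparing $\acd(N)$ with $k(G/N)$, it pairs each ``bad'' $\theta\in\Irr(F)$ (one with many trivial factors, hence small degree) with a companion $\theta'$ obtained by replacing trivial factors by a fixed $\Aut(S_i)$-invariant character $\alpha_i$ and vice versa. The crucial points are that $\theta\mapsto\theta'$ respects $G$-orbits, that $|\Irr(G|\theta)|=|\Irr(G|\theta')|$ (this is where the extendability Lemma~2.2 enters, via the count of $\theta$-special classes), and that at least one of $\theta(1),\theta'(1)$ is $\ge\prod_i m(S_i)^{u_i/2}$. Averaging block by block then yields $\acd(G)\ge\frac12\prod_i m(S_i)^{u_i/2}$ directly, which bounds $t$, the multiplicities $u_i$, and each $m(S_i)$; Jordan's theorem then bounds each $|S_i|$. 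Your ingredient (ii) $\acd(T)\to\infty$ is morally what the paper uses, but packaged as the lower bound $m(S)\to\infty$ via Jordan (Lemma~2.3) rather than as a separate CFSG-based family-by-family growth estimate; the CFSG input in the paper is instead Lemma~4.2 of \cite{mor} (existence of a nontrivial $\alpha_i\in\Irr(S_i)$ extending to $\Aut(S_i)$), which your sketch does not use.
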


 Theorem 9 of \cite{gr} actually asserts that if $G$ is any finite group, then $$|G:\sol(G)|\leq\acs(G)^2,$$ where $\sol(G)$ is the solvable radical of $G$. The proof of this result quickly reduces to the case of groups with trivial solvable radical because, by Lemma 2(ii) of \cite{gr}, $\acs(G/N)\leq \acs(G)$ for any finite group $G$ and $N\trianglelefteq G$. As we discuss in Section 5 the  corresponding question for the average character degree seems more complicated.
 
 It follows from Theorem 4 and Theorem 8 of \cite{gr} that in fact  $|G:\bF(G)|$ is bounded from above in terms of $\acs(G)$, where $\bF(G)$ is the Fitting subgroup of $G$.  Actually, $|G:\bF(G)|\leq\acs(G)^2$ by Theorem 10 of \cite{gr}. It is not difficult to see that $|G:\bF(G)|$ is not bounded in terms of $\acd(G)$. (This is  perhaps surprising in view of the many parallel results between conjugacy class sizes and character degrees.)  If $G$ is the Frobenius group of order $(p-1)p$, where $p$ is any odd prime, then $|G:\bF(G)|=p-1$ is arbitrarily large but $\acd(G)<2$. One could think that, perhaps, this is due to the fact that these groups have arbitrarily many linear characters and just one nonlinear irreducible character. We will see in Example \ref{ex1} that $|G:\bF(G)|$ cannot be bounded from above  in terms of the average of the degrees of the nonlinear irreducible characters either. But what if we just consider certain irreducible characters?

As usual,  given a finite group $G$, $N\trianglelefteq G$, and $\lambda\in\Irr(N)$, we write $\Irr(G|\lambda)$ to denote the set of irreducible characters of $G$ that lie over $\lambda$. We set 
 $$
 \acd(G|\lambda)=\frac{\sum_{\chi\in\Irr(G|\lambda)}\chi(1)}{|\Irr(G|\lambda)|}.
 $$
 With this notation, we have the following result.
 
\begin{thmB}
Let $G$ be a solvable group. Then there exists $\lambda\in\Irr(\bF(G))$ linear  such that $|G:\bF(G)|\leq\acd(G|\lambda)^{\alpha}$, where $\alpha<2.596$.
\end{thmB}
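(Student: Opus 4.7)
The plan is to reduce the statement to an orbit bound for solvable linear groups by means of Clifford theory. Write $F = \bF(G)$.

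For any linear $\lambda \in \Irr(F)$ with inertia subgroup $T = G_\lambda$, the Clifford correspondence yields a bijection $\Irr(T|\lambda) \to \Irr(G|\lambda)$ via induction $\psi \mapsto \psi^G$, under which $\chi(1) = |G:T|\,\psi(1) \geq |G:T|$ for every $\chi \in \Irr(G|\lambda)$. Averaging,
$$
\acd(G|\lambda) \geq |G:T| = |\lambda^G|.
$$
So it suffices to exhibit a linear $\lambda \in \Irr(F)$ with $|\lambda^G| \geq |G:F|^{1/\alpha}$, since then $\acd(G|\lambda)^\alpha \geq |G:F|$.

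This is an orbit question. Since $F$ is nilpotent, linear characters of $F$ factor through $F/F'$, and $F' \leq \Phi(F)$. Let $V = F/\Phi(F) = \bigoplus_p V_p$; via inflation through the projection $F/F' \twoheadrightarrow V$, every character of $V$ lifts to a linear character of $F$ with the same $G$-stabilizer. Moreover, the $G$-orbits on $\widehat V$ and on $V$ have the same sizes, because the number of $g$-fixed points on $V$ equals that on $\widehat V$ for every $g \in G$. Hence it suffices to find $v \in V$ with $|v^{G/F}| \geq |G:F|^{1/\alpha}$.

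The map $G/F \to \GL(V)$ need not be injective in general. Nevertheless, using $C_G(F) \leq F$ together with coprime-action arguments applied prime by prime to $V = \bigoplus_p V_p$ (and passing to appropriate Hall subgroups of $G/F$ as needed), one reduces the bound $|G:F| \leq |v^{G/F}|^\alpha$ to an orbit bound for the image $H$ of $G/F$ in $\GL(V)$, which is a completely reducible solvable linear group. The key input is then a sharp orbit theorem: for any completely reducible solvable linear group $H \leq \GL(V)$, there exists $v \in V$ with $|H| \leq |v^H|^\alpha$ for some absolute constant $\alpha < 2.596$. Bounds of this shape have been developed in work on Gluck's conjecture and on orbits of solvable linear groups (Moret\'o--Wolf, Keller, Yang, Halasi--Mar\'oti, and others); combining such a bound with the Clifford setup above yields the desired $\lambda$.

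The main obstacle is establishing the sharp constant $\alpha < 2.596$ in the orbit theorem. After standard reductions to primitive irreducible solvable linear groups, this reduces to a case analysis of a short list of small exceptional groups whose orbit structures pin down the numerical value of $\alpha$. A secondary technical point is the possible non-faithfulness of the action of $G/F$ on $V$ in the non-coprime case; this is handled by iterating the construction on suitably chosen Hall subgroups of $G/F$ and the corresponding Sylow components of $F$, ensuring the orbit bound applies uniformly to the original quotient $G/F$.
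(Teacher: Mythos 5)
Your high-level plan is exactly the one the paper uses: reduce to a large-orbit theorem via Clifford correspondence, since every $\chi \in \Irr(G|\lambda)$ satisfies $\chi(1) \geq |G:I_G(\lambda)|$, and then find a linear $\lambda$ in a long $G$-orbit. Where you diverge, and where the gap lies, is in how the module and the faithfulness/complete reducibility are obtained.

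You take $V = F/\Phi(F)$ and then spend most of the proposal worrying about (a) the action of $G/F$ on $V$ not being faithful and (b) patching this by ``coprime-action arguments applied prime by prime'' and ``iterating on Hall subgroups.'' The paper avoids all of this by invoking Gasch\"utz's theorem (Theorem 1.12 of Manz--Wolf): for $G$ solvable, $G/\bF(G)$ acts faithfully \emph{and} completely reducibly on $\bF(G)/\Phi(G)$, and by Proposition 12.1 of Manz--Wolf the same is true of the dual action on $\Irr(\bF(G)/\Phi(G))$. That is the clean input that lets one apply a large-orbit theorem directly; since $F' \leq \Phi(F) \leq \Phi(G)$, every character of this quotient inflates to a linear character of $F$. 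Your version has two problems. First, faithfulness on $F/\Phi(F)$ actually does hold (because $\Phi(F)\leq\Phi(G)$, so $F/\Phi(G)$ is a quotient of your $V$), but you present it as an unresolved obstacle and propose a Hall-subgroup workaround that is not spelled out and would in any case run into the non-coprime primes you mention. Second, and more seriously, the orbit theorem you want requires the module to be completely reducible, and $F/\Phi(F)$ need not be completely reducible as a $G/F$-module; you never address this, and it is precisely what Gasch\"utz's theorem with $\Phi(G)$ in the denominator guarantees.

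You also describe the sharp orbit theorem as the ``main obstacle'' and sketch a reduction to primitive solvable linear groups. The paper does no such work: it simply cites Yang's result (Theorem 3.4 of \cite{yan}), which gives exactly the bound $|H| < b(H,V)^{\alpha}$ with $\alpha = \log(6\cdot 24^{1/3})/\log 3 < 2.596$ for solvable $H$ acting faithfully and completely reducibly on $V$. So the real missing ingredient in your write-up is not the constant $\alpha$ but the Gasch\"utz setup that makes the cited orbit theorem applicable, together with a concrete citation for the orbit theorem itself.
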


 If $G$ is a finite group, $b(G)$ is the largest degree of the irreducible characters of $G$. Gluck's conjecture \cite{glu} asserts that if $G$ is a solvable group then 
 $|G:\bF(G)|\leq b(G)^2$. D. Gluck \cite{glu} proved that $|G:\bF(G)|\leq b(G)^{13/2}$. There has been a series of improvements on this bound (see, for instance, \cite{mowo, chmn, yan}) but the conjecture remains open. 
  Clearly, $\acd(G|\lambda)\leq b(G)$ and it is not difficult to find  examples where $\acd(G|\lambda)<b(G)$ for every $\lambda\in\Irr(\bF(G))$, so
Theorem B suggests the following refinement of  Gluck's conjecture \cite{glu}.  We remark, however, that our proof just mimics the arguments of known results on Gluck's conjecture. 

\begin{conC}
Let $G$ be a solvable group. Then there exists $\lambda\in\Irr(\bF(G))$ linear such that $|G:\bF(G)|\leq\acd(G|\lambda)^2$.
\end{conC}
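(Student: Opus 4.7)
The plan is to follow the Clifford-theoretic template that underlies every known approach to Gluck's conjecture, noting at the outset that $\acd(G|\lambda)\leq b(G)$, so Conjecture C is a refinement of Gluck's conjecture itself and cannot be strictly easier than that open problem. Consequently I would only expect a full proof to be accessible in cases where Gluck's conjecture is already known, with the general case requiring new ideas.

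First, I would carry out the standard reduction to the case in which $V:=\bF(G)$ is an elementary abelian $p$-group and $H:=G/V$ acts faithfully on $V$; passing to $G/\Phi(\bF(G))$ and restricting attention to a suitable irreducible $H$-composition factor of $\bF(G)/\Phi(\bF(G))$ should reduce matters to this setting, since $\acd(G|\lambda)$ for linear $\lambda$ is insensitive to this truncation. I would then identify $\Irr(V)$ with the dual $\FF_pH$-module $V^*$ and choose $\lambda\in V^*$.

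Next I would invoke Clifford theory. Setting $T=G_\lambda=VH_\lambda$, induction gives a bijection $\Irr(T|\lambda)\to\Irr(G|\lambda)$, $\psi\mapsto\psi^G$, with $\psi^G(1)=[H:H_\lambda]\,\psi(1)$. Hence
\[
\acd(G|\lambda)\;=\;[H:H_\lambda]\cdot\acd(T|\lambda),
\]
and, provided $\lambda$ extends to $\tilde\lambda\in\Irr(T)$ (the cohomological obstruction in $H^2(H_\lambda,\CC^\times)$ being the only subtlety), $\acd(T|\lambda)=\acd(H_\lambda)$. The desired inequality $|H|\leq\acd(G|\lambda)^2$ is then equivalent to
\[
|H_\lambda|\;\leq\;[H:H_\lambda]\cdot\acd(H_\lambda)^2,
\]
which holds trivially whenever $H$ admits a regular orbit on $V^*$.

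The essential remaining task, and the main obstacle, is to treat the faithful $H$-modules $V$ for which no regular orbit exists: primitive linear actions, symplectic modules associated with extraspecial normal subgroups, tensor-induced modules, and the other configurations that obstruct the full proof of Gluck's conjecture. In each such case one would aim to produce a $\lambda$ whose orbit has size of order $|H|^{1/2}$ and whose stabilizer $H_\lambda$ has $\acd(H_\lambda)$ large enough to absorb the deficit. This averaging requirement is strictly stronger than what is demanded by Gluck's bound $b(G)^2$ — where a single large character degree suffices — and I expect it, together with the still-open status of Gluck's conjecture, to constitute the genuine barrier. A realistic attack would be an induction in which $\acd$ on stabilizers is tracked simultaneously with orbit sizes, exploiting refinements of the regular-orbit theorems developed in \cite{mowo,chmn,yan}; a full resolution of Conjecture C probably must accompany, rather than precede, a full resolution of Gluck's conjecture.
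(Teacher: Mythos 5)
The statement you were asked to prove is Conjecture C, which the paper explicitly poses as an open problem — it proves only the weaker Theorem B, where the exponent $2$ is replaced by $\alpha \approx 2.596$, using Gasch\"utz's theorem to pass to the faithful completely reducible action of $G/\bF(G)$ on $\bF(G)/\Phi(G)$ and then quoting Yang's large-orbit bound from \cite{yan}. Your proposal correctly recognizes that Conjecture C is open and that it implies Gluck's conjecture, and your Clifford-theoretic reduction (via $\acd(G|\lambda)=[H:H_\lambda]\cdot\acd(T|\lambda)$, with the further simplification $\acd(T|\lambda)=\acd(H_\lambda)$ when $\lambda$ extends) is a sound and somewhat sharper framework than what the paper uses: the paper's Theorem B discards the factor $\acd(T|\lambda)\geq 1$ entirely and bounds $\acd(G|\lambda)\geq |G:I_G(\lambda)|$ directly, which is exactly why it lands on the exponent $\alpha$ rather than $2$.

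One remark worth making: your reduction shows that if some $\lambda$ has orbit of size at least $|H|^{1/2}$ \emph{and} extends to its inertia group, then the target inequality $|H_\lambda|\leq [H:H_\lambda]\acd(H_\lambda)^2$ follows from $\acd(H_\lambda)\geq 1$ alone, so in that situation Conjecture C is no harder than Gluck's. The genuine extra difficulty compared to Gluck's conjecture arises precisely where the extension fails or where no orbit of size $|H|^{1/2}$ exists (the configurations behind Example 13 of \cite{wol}, which the paper revisits in Section 4); there your claim that the averaging condition is strictly stronger is accurate, since many low-degree constituents over the chosen $\lambda$ could drag down $\acd(G|\lambda)$ even when $b(G)$ is large. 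So the proposal is not a proof — nor could it be, given the status of the problem — but it correctly locates the obstruction, matches the reduction underlying Theorem B, and even refines it by keeping track of $\acd(H_\lambda)$.
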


We close this Introduction with thanks to N. N.  Hung for many helpful conversations on this paper. In particular, it was he who suggested that, perhaps, the index of the Fitting subgroup of a group  could be bounded in terms of the average degree of the irreducible characters of $G$ lying over some irreducible character of some normal subgroup.

\section{Proof of Theorem A}

In this section, we prove Theorem A.  We start by recalling a well-known consequence of the Krull-Schmidt theorem.

\begin{lem}
\label{aut}
Let $F=N_1\times\cdots\times N_t$ where $N_i$ is the direct product of $u_i\geq1$ copies of a nonabelian simple group $S_i$ for every $i$. Assume that $S_i\not\cong S_j$ if $i\neq j$. Then $\Aut(F)\cong\Aut(N_1)\times\cdots\times\Aut(N_t)$. Furthermore, $\Aut(N_i)\cong\Aut(S_i)\wr\SSS_{u_i}$.
\end{lem}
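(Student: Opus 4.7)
The plan is to invoke the Krull--Schmidt theorem together with the fact that a nonabelian simple group is centerless and indecomposable, and to exploit the hypothesis $S_i\not\cong S_j$ for $i\neq j$ to rigidify things.

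First, I would expand $F$ as the direct product of all the simple factors: $F=\prod_{i,j}S_{i,j}$, where $S_{i,j}$ is the $j$-th copy of $S_i$ for $1\leq j\leq u_i$. Each $S_i$ is centerless and indecomposable, so Krull--Schmidt guarantees that any automorphism $\varphi$ of $F$ permutes the set $\{S_{i,j}\}$ of simple direct factors, inducing an isomorphism from each $S_{i,j}$ onto its image. Because $S_i\not\cong S_j$ when $i\neq j$, no $S_{i,j}$ can be sent to an $S_{k,\ell}$ with $k\neq i$. Therefore $\varphi$ stabilizes the collection of copies of each $S_i$, and in particular $\varphi(N_i)=N_i$ for all $i$; that is, $N_i$ is characteristic in $F$. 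Combined with the evident embedding $\prod_i\Aut(N_i)\hookrightarrow\Aut(F)$, this yields $\Aut(F)\cong\Aut(N_1)\times\cdots\times\Aut(N_t)$.

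For the second claim, I would apply the same Krull--Schmidt argument inside $N_i=S_i^{u_i}$: any $\psi\in\Aut(N_i)$ permutes the $u_i$ simple factors, which produces a homomorphism $\pi\colon\Aut(N_i)\to\SSS_{u_i}$. The kernel $\Ker\pi$ consists of the automorphisms that stabilize each simple factor setwise and hence act on each as an element of $\Aut(S_i)$, giving $\Ker\pi\cong\Aut(S_i)^{u_i}$. The map $\pi$ is split by the obvious coordinate-permutation action of $\SSS_{u_i}$ on $S_i^{u_i}$, so $\Aut(N_i)\cong\Aut(S_i)^{u_i}\rtimes\SSS_{u_i}=\Aut(S_i)\wr\SSS_{u_i}$.

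The only nontrivial step is verifying that every automorphism of $F$ genuinely preserves each $N_i$; this is precisely where the hypothesis $S_i\not\cong S_j$ for $i\neq j$ is used. Everything else is a routine unpacking of the wreath-product structure, and no substantial obstacle is expected.
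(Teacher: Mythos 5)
Your proof is correct and follows exactly the route the paper gestures at: the paper gives no proof, merely calling this a well-known consequence of the Krull--Schmidt theorem, and your argument supplies the standard details (the centerless, indecomposable hypotheses force the uniqueness of the decomposition as a set of subgroups, so automorphisms permute the simple factors, isomorphism type controls which factors can go where, and the permutation action on the copies of a fixed $S_i$ produces the wreath product). No gaps.
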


We need to use cohomology to prove the following extendability criterion.
   
  \begin{lem}
  \label{ext}
  Let $M,N\trianglelefteq G$  with $M\cap N=1$. Put $K=M\times N$, where $M$ and $N$ are   perfect.  Let $\alpha\in\Irr(M)$ and $\beta\in\Irr(N)$ be $G$-invariant. Assume that $\beta$ extends to $G$. Then $\alpha\times\beta$ extends to $G$ if and only if $\alpha$ extends to $G$.
  \end{lem}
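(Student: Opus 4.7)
The plan is to prove both implications by direct representation-theoretic constructions, relying on Schur's lemma together with the perfectness of $M$ and $N$; I do not actually need the cohomological machinery explicitly, although what is going on is that the obstruction class of $\alpha\times\beta$ in $H^2(G/K,\CC^\times)$ matches the obstruction of $\alpha$ once the obstruction of $\beta$ is killed by hypothesis.

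First a preliminary: since $M,N\trianglelefteq G$ and $M\cap N=1$, the subgroups $M$ and $N$ commute elementwise. If $\mathcal{T}$ is any representation of $G$ whose restriction to $M$ affords the irreducible character $\alpha$, then for each $n\in N$ the operator $\mathcal{T}(n)$ commutes with the irreducible $\mathcal{T}|_M$, so by Schur's lemma $\mathcal{T}|_N$ is scalar-valued and defines a linear character of $N$; since $N$ is perfect this character must be trivial. Hence any extension $\tilde\alpha$ of $\alpha$ satisfies $\tilde\alpha|_K=\alpha\times 1_N$, and symmetrically $\hat\beta|_K=1_M\times\beta$ for any extension $\hat\beta$ of $\beta$. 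With this in hand the easy direction is immediate: given $\tilde\alpha$ and $\hat\beta$, the product $\tilde\alpha\hat\beta$ restricts on $K$ to $(\alpha\times 1_N)(1_M\times\beta)=\alpha\times\beta$, which is irreducible of the same degree, so $\tilde\alpha\hat\beta\in\Irr(G)$ is an extension of $\alpha\times\beta$.

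For the converse, assume $\chi\in\Irr(G)$ extends $\alpha\times\beta$ and let $\hat\beta$ extend $\beta$, afforded by representations $\mathcal{X}$ and $\mathcal{B}$ respectively. The preliminary step gives $\mathcal{B}(m)=I$ for all $m\in M$. Fix a $K$-equivariant identification of the underlying space of $\mathcal{X}$ with $V_\alpha\otimes V_\beta$ so that $\mathcal{X}(m)=\alpha(m)\otimes I$ and $\mathcal{X}(n)=I\otimes\beta(n)$ on $K$. The central construction is the multiplicity space
$$W:=\operatorname{Hom}_N\bigl(V_\beta,\; V_\alpha\otimes V_\beta\bigr),$$
endowed with the $G$-action $(g\cdot f)(v):=\mathcal{X}(g)\,f\bigl(\mathcal{B}(g)^{-1}v\bigr)$. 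The verification that $g\cdot f$ lies in $W$ uses exactly the identity $\mathcal{B}(g)\beta(n)\mathcal{B}(g)^{-1}=\beta(gng^{-1})$, which holds because $\mathcal{B}$ is an ordinary extension of $\beta$; associativity of the action is routine.

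To finish, Schur's lemma applied to $V_\beta$ as an $N$-module yields a canonical vector space isomorphism $W\cong V_\alpha$, since $V_\alpha\otimes V_\beta$ is $\alpha(1)$ copies of $V_\beta$ as $N$-module. Because $\mathcal{B}(m)=I$ on $M$, the $M$-action on $W$ reduces to post-composition by $\mathcal{X}(m)=\alpha(m)\otimes I$, which under $W\cong V_\alpha$ is exactly the representation $\alpha$. So $W$ affords an extension of $\alpha$ to $G$. The only real piece of insight is spotting $W$ as the multiplicity space of $V_\beta$ inside $\mathcal{X}|_N$; the hypothesis that $\hat\beta$ is an \emph{ordinary} (rather than merely projective) extension of $\beta$ is precisely what promotes the would-be projective $G$-action on $W$ to a genuine one, and the perfectness of $M$ and $N$ is what keeps the resulting $M$-representation equal to $\alpha$ on the nose.
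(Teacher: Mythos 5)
Your proof is correct, and it takes a genuinely different route from the paper's. The paper deduces both implications from the cohomological extendability criterion (Theorem~11.7 of Isaacs) together with the multiplicativity of the obstruction class $[\,\cdot\,]_{G/K}$ on products of $K$-characters (Corollary~6.4 of Isaacs--Malle--Navarro); the preliminary observation that $\tilde\alpha|_K=\alpha\times 1_N$ and $\hat\beta|_K=1_M\times\beta$, forced by Schur's lemma plus perfectness of $M$ and $N$, appears in both proofs. Where you diverge is that you avoid cohomology altogether: your forward direction is the elementary observation that $\tilde\alpha\hat\beta$ restricts irreducibly on $K$ (the paper, somewhat unnecessarily, routes this direction through cohomology too), and for the converse you construct the extension of $\alpha$ concretely as the multiplicity space $W=\operatorname{Hom}_N(V_\beta,V_\chi)$ with the twisted $G$-action $(g\cdot f)(v)=\mathcal{X}(g)f(\mathcal{B}(g)^{-1}v)$, identifying $W\cong V_\alpha$ and checking that $M$ acts via $\alpha$ and $N$ trivially. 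This is exactly the representation-theoretic content of ``dividing'' the obstruction of $\alpha\times\beta$ by that of $\tilde\beta$, and it makes transparent why an ordinary (rather than merely projective) extension of $\beta$ is the right hypothesis. What the paper's approach buys is brevity and the ability to cite a ready-made multiplicativity result; what your approach buys is a self-contained, cohomology-free argument that exhibits the desired extension explicitly.
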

  
  \begin{proof}
  Let $\chi\in\Irr(G)$ be an extension of $\beta$. Then $\chi_K$ extends $\beta$ and $\chi(1)=\beta(1)$. Since $M$ is perfect, $\chi_K=1_M\times\beta$, so $\tilde{\beta}=1_M\times\beta$ extends to $G$.   
  
  Suppose first that $\alpha$ extends to $G$. Arguing as before, we deduce that $\tilde{\alpha}=\alpha\times1_N$ extends to $G$. Note that $\alpha\times\beta=\tilde{\alpha}\tilde{\beta}$. 
  By Corollary 6.4 of \cite{imn},  $[\alpha\times\beta]_{G/K}=[\tilde{\alpha}]_{G/K}[\tilde{\beta}]_{G/K}$. By Theorem 11.7 of \cite{isa}, $[\tilde{\beta}]_{G/K}=1=[\tilde{\alpha}]_{G/K}$ so $[\alpha\times\beta]_{G/K}=1.$ By Theorem 11.7 of \cite{isa}, $\alpha\times\beta$ extends to $G$. 
  
  Conversely, assume that $\alpha\times\beta$ extends to $G$. Write $\alpha\times\beta=\tilde{\alpha}\tilde{\beta}$ , where $\tilde{\alpha}=\alpha\times 1_N$ is the unique extension of $\alpha$ to $K$. By Theorem 11.7 of \cite{isa} and Corollary 6.4 of \cite{imn}, 
  $$
  1=[\alpha\times\beta]_{G/K}=[\tilde{\alpha}]_{G/K}[\tilde{\beta}]_{G/K}=[\tilde{\alpha}]_{G/K}.
  $$
  Using Theorem 11.7 of \cite{isa} again, 
  $\tilde{\alpha}$, and hence $\alpha$, extends to $G$, as wanted.
   \end{proof}
   
   This result is false if we remove the hypothesis that $M$ and $N$ are perfect. A counterexample can be found in $G={\tt SmallGroup}(32, 2)$. (I thank G. Navarro for pointing this out.)    
  
  Our next lemma  is an immediate consequence of Jordan's theorem on linear groups.
  
\begin{lem}
\label{jor}
Let $S$ be a nonabelian simple group. Then $|S|$ is bounded from above in terms of $m(S)$, where $m (S)$ is the smallest degree of the nonlinear irreducible characters of $S$. \end{lem}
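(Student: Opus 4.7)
The plan is to show that $S$ admits a faithful complex representation of degree $m(S)$ and then invoke Jordan's classical theorem on finite linear groups. To that end, I would first observe that every nonlinear $\chi\in\Irr(S)$ is automatically faithful: the character $\chi$ is nontrivial so $\ker\chi\neq S$, and the simplicity of $S$ forces $\ker\chi=1$. Choosing such a $\chi$ with $\chi(1)=m(S)=:m$ therefore yields an embedding $S\hookrightarrow\GL_m(\CC)$.

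Next I would apply Jordan's theorem, which supplies a function $J\colon\bN\to\bN$ such that every finite subgroup of $\GL_n(\CC)$ contains a normal abelian subgroup of index at most $J(n)$. Applied to the image of $S$ inside $\GL_m(\CC)$, this produces an abelian normal subgroup $A\trianglelefteq S$ with $|S:A|\leq J(m)$. Since $S$ is simple, $A\in\{1,S\}$; and since $S$ is nonabelian, $A\neq S$. Hence $A=1$ and $|S|=|S:A|\leq J(m(S))$, which is exactly the desired bound.

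There is essentially no obstacle here, which is why the author calls the lemma ``an immediate consequence of Jordan's theorem.'' The only minor point worth checking is that $m(S)$ is well-defined, i.e.\ that $S$ has at least one nonlinear irreducible character; but this is automatic, because $[S,S]=S$ implies that the trivial character is the only linear character of $S$, while $|\Irr(S)|>1$.
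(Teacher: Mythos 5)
Your argument is correct and is exactly the one the paper intends: it unpacks the phrase ``immediate consequence of Jordan's theorem'' by noting that a minimal-degree nonlinear character of $S$ is faithful by simplicity, embedding $S$ into $\GL_{m(S)}(\CC)$, and then observing that the abelian normal subgroup of bounded index furnished by Jordan must be trivial. The paper gives no more detail than a citation to Jordan's theorem (and to Collins for sharp bounds), so you have simply filled in the standard steps.
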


\begin{proof}
This follows from Jordan's theorem. Using the classification of finite simple groups, sharp bounds were obtained by M. Collins \cite{col}, building on work of Weisfeiler.
\end{proof}

In Theorem A we have the hypothesis that the solvable radical of $G$ is trivial. This implies that the generalized Fitting subgroup of $G$ coincides with the socle of $G$ and is the direct product of the (nonabelian) minimal normal subgroups of $G$ (see 6.5.5 of \cite{ks}, for instance). Recall that if we write $F$ to denote the generalized Fitting subgroup of $G$ then $\bC_G(F)\leq F$ (by Theorem 6.5.8 of \cite{ks}). It follows that if the solvable radical of $G$ is trivial then $G$ is isomorphic to a subgroup of $\Aut(F)$ and by Lemma \ref{aut} we know the structure of this group.  We will use this fact repeatedly.

We need to introduce some notation referring to the irreducible characters of $F$.
Let $$F=S_{11}\times\cdots\times S_{1u_1}\times\cdots\times S_{t1}\times\cdots\times S_{tu_t}$$ be a direct product of nonabelian simple groups with $S_{ij}\cong S_{ik}\cong S_i$ for every $j, k$ and $S_i\not\cong S_j$ for $i\neq j$. Let 
$$
\theta=\alpha_{11}\times\cdots\times\alpha_{1u_1}\times\cdots\times\alpha_{t1}\times\cdots\times\alpha_{tu_t}\in\Irr(F).
$$ 
We will say that each of the characters $\alpha_{ij}$ is a factor of $\theta$.  Each factor $\alpha_{ij}$ of $\theta$ is an irreducible character of a simple group $S_{ij}$. Given a factor $\alpha$ of $\theta$, we will write $F_{\alpha}\leq F$ to denote the direct product of the direct factors of $F$ whose factor in $\theta$ is $\alpha$. Note that
$$F=\prod F_{\alpha}\text{\,\,\,\,\,\,\,(direct product)}$$
where the product runs over the factors $\alpha$ of $\theta$.  Write $\Gamma=\Aut(F)$. Notice that for every factor $\alpha$ of $\theta$, $F_{\alpha}$ is normal in $I_{\Gamma}(\theta)$.  We will refer to the decomposition $F=\prod F_{\alpha}$ as the $\theta$-decomposition of $F$.

For every simple group $S_i$, we fix $\alpha_i\in\Irr(S_i)$ such that $\alpha_i$  extends to $\Aut(S_i)$ (such a character exists by Lemma 4.2 of \cite{mor}). 
If $\theta$ has ``many" principal factors associated to any given simple group $S_i$, we define a character $\theta'\in\Irr(F)$ by means of the following rules:

{\bf Step 1:}
For every $i$, if the number of factors $\alpha_{ij}$ of $\theta$ that are $1_{S_i}$ is bigger than $u_i/2$, then we replace $1_{S_i}$ by  $\alpha_i$  all the times. We write $\overline{\theta}$ to denote this new character. 

{\bf Step 2:}
 In this case, if $\alpha_i$ was already a factor of $\theta$, then we replace the $\alpha_i$ factors of $\theta$ by $1_{S_i}$. We write $\theta'$ to denote this new character. 
 
 Note that the $\theta$-decomposition, the $\overline{\theta}$-decomposition and the $\theta'$-decompositions of $F$ coincide.

If the number of factors $\alpha_{ij}$ in $\theta$ that are $1_{S_i}$ is at most $u_i/2$ for every $i$, we do not define $\theta'$ and we say that $\theta'$ is not defined. Note that if $\theta'$ is not defined then at least one-half of the factors corresponding to the copies of each $S_i$ is nonprincipal so
$$
\theta(1)\geq \prod_{i=1}^t m(S_i)^{u_i/2}.
$$

In the next result we collect some key properties of $\theta$ and $\theta'$ when $\theta'$ is defined.

\begin{lem}
\label{mam}
Let $G$ be a group with trivial solvable radical. 
Let $F$ as before be the generalized Fitting subgroup of $G$. Let $\theta\in\Irr(F)$ and assume that $\theta'$ is defined.
The characters $\theta$ and  $\theta'$ satisfy the following properties:
\begin{enumerate}
\item
$\theta'(1)\geq \prod_{i=1}^t m(S_i)^{u_i/2}$. 
\item
$I_G(\theta)=I_G(\theta')$.
\item
$|\Irr(G|\theta)|=|\Irr(G|\theta')|$
\item
If $\gamma\in\Irr(F)$ is such that $\gamma'$ is defined (possibly $\gamma=\theta$), then $\theta$ and $\gamma'$ are not $G$-conjugate. Furthermore, if $\theta$ and $\gamma$ are not $G$-conjugate, then $\theta'$ and $\gamma'$ are not $G$-conjugate.
\end{enumerate}
\end{lem}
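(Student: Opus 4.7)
The plan is to verify the four assertions in turn, using the common block decomposition $F = \prod_i N_i$ with $N_i = S_i^{u_i}$ (shared by the $\theta$- and $\theta'$-decompositions). Parts (i) and (ii) yield to direct analysis. For (i), I compute $\theta'(1)$ block by block: in blocks where step 1 was applied, $\theta'|_{N_i}$ contains $a > u_i/2$ copies of the nonlinear character $\alpha_i$, contributing at least $m(S_i)^{u_i/2}$; in blocks where step 1 was not applied, $\theta'|_{N_i} = \theta|_{N_i}$ already has at least $u_i/2$ nonprincipal factors, giving the same bound. Taking the product yields the inequality. For (ii), Lemma \ref{aut} embeds $G$ in $\prod_i(\Aut(S_i)\wr\SSS_{u_i})$, so the $G$-action preserves each $N_i$ and within each block combines a permutation of the $u_i$ copies with a coordinate-wise $\Aut(S_i)$-action. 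Since both $1_{S_i}$ and $\alpha_i$ are $\Aut(S_i)$-invariant (the latter by choice), the stabilizer condition for $\theta|_{N_i}$ depends only on the partition of positions by character type, and this partition is unchanged when $\theta'$ relabels the types $1_{S_i}$ and $\alpha_i$, so $I_G(\theta) = I_G(\theta')$.

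For (iii), the Clifford correspondence reduces the claim to $|\Irr(T|\theta)| = |\Irr(T|\theta')|$ with $T = I_G(\theta) = I_G(\theta')$. The strategy is to show that the associated Clifford cohomology classes of $\theta$ and $\theta'$ in $H^2(T/F, \CC^\times)$ coincide. For each block $N_i$ where the swap occurred, split $N_i = A_i \times B_i \times C_i$, where $A_i$ and $B_i$ consist of the positions where $\theta$ has $1_{S_i}$ and $\alpha_i$ respectively, and $C_i$ collects the remaining positions; each factor is perfect and normal in $T$. Because $\alpha_i$ extends to $\Aut(S_i)$, repeated application of Lemma \ref{ext} in each intermediate subgroup $F \leq H \leq T$ shows that $\theta$ extends to $H$ if and only if $\theta'$ does; by Theorem 11.7 of \cite{isa}, this equivalence across all intermediate subgroups forces the cohomology classes to agree, and the required equality of character counts follows.

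For (iv), the first assertion is immediate from the construction: $\theta'$ has at most $u_i/2$ copies of $1_{S_i}$ in every block $i$ (the swap reduces blocks where $1$'s were the majority, while unaffected blocks already satisfy this), whereas $\theta$ exceeds $u_i/2$ copies in at least one block; since the number of $1_{S_i}$-factors is $G$-orbit invariant, $\theta$ and $\gamma'$ lie in different $G$-orbits. The second assertion is equivalent to: $\theta' \sim_G \gamma'$ implies $\theta \sim_G \gamma$. My approach is to combine the $G$-equivariance of the map $\theta \mapsto \theta'$ (giving $(\theta^g)' = (\theta')^g$ for all $g \in G$) with a reconstruction result: from the pair $(\theta'|_{N_i}, \text{data recording which blocks underwent swap})$ one should be able to recover $\theta|_{N_i}$ up to the action of $\Aut(S_i)\wr\SSS_{u_i}$, so that $\theta'$-orbits under $G$ pull back injectively to $\theta$-orbits. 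Establishing this orbit-level injectivity, which depends on exploiting the count information preserved by step 2, is the main obstacle.
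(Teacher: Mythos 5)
Your treatment of (i) and (ii) matches the paper's: (i) is a direct count and (ii) follows from $I_\Gamma(\theta)=I_\Gamma(\theta')$ (where $\Gamma=\Aut(F)$, via Lemma \ref{aut}) together with $G\leq\Gamma$.

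For (iii), the core idea is right but your framing overshoots. You claim that agreement of extension behavior over all intermediate $H$ "forces the cohomology classes to agree" --- this is neither true in general (equality of restrictions to all subgroups need not give equality of classes) nor needed. The paper instead reduces $|\Irr(T|\theta)|=|\Irr(T|\theta')|$ to counting special classes via Problems 11.9 and 11.10 of \cite{isa}: the number of characters lying over a $T$-invariant $\theta$ equals the number of $\theta$-special classes of $T/F$, and an element $gF$ is $\theta$-special precisely when $\theta$ extends to $\langle F,g\rangle$. Thus it suffices to show, for every $F\leq H\leq T$, that $\theta$ extends to $H$ if and only if $\theta'$ does, which is exactly what repeated application of Lemma \ref{ext} gives. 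Your decomposition $N_i=A_i\times B_i\times C_i$ is a reasonable way to organize those applications; the paper factors $F=M\times N$ with $N=\Ker\theta$, which is a slightly cleaner bookkeeping. So same engine, but replace the cohomology-class claim by the special-class count.

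For (iv), your proof of the first assertion is fine and agrees with the paper's intent (the per-block count of $1_{S_i}$-factors is a $G$-conjugacy invariant, and $\theta$ has such a count $>u_i/2$ somewhere while every $\gamma'$ has all counts $\leq u_i/2$). For the second assertion you have honestly flagged that you cannot establish orbit-level injectivity of $\theta\mapsto\theta'$ because $\theta'$ alone does not record which blocks were swapped, and this concern is genuine --- it is not a presentation gap you can paper over. The map is $G$-equivariant, but injectivity fails: if one block $i$ of $\theta$ has $a>u_i/2$ copies of $1_{S_i}$ and $b$ copies of $\alpha_i$, a swap sends $(a,b)$ to $(b,a)$, while a $\gamma$ that already had $(b,a)$ in that block and triggers a swap in a \emph{different} block is untouched there; choosing counts so that the post-swap blocks match factor-multiset by factor-multiset yields $\theta\not\sim_G\gamma$ but $\theta'\sim_G\gamma'$. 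The paper's one-line proof of (iv) ("it follows from Lemma \ref{aut} and the fact that $G\leq\Aut(F)$") does not address this point, so you should not expect to recover a proof from the paper here; the construction of $\theta'$ (or the statement of (iv)) needs to be adjusted to make the map on orbits injective, for instance by making the swap unconditional in every block.
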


\begin{proof}
(i) It suffices to note that $\theta'$ has been defined so that the number of nonprincipal irreducible factors corresponding to the copies of each $S_i$ is at least one-half of the number of copies of $S_i$. 

(ii) Let $\Gamma=\Aut(F)$. It follows from Lemma \ref{aut} that $I_{\Gamma}(\theta)=I_{\Gamma}(\theta')$. Since $G$ is a subgroup of $\Gamma$, the result follows. 

(iii) Put $T=I_G(\theta)=I_G(\theta')$. By Clifford's correspondence (Theorem 6.11 of \cite{isa}), it suffices to see that $|\Irr(T|\theta)|=|\Irr(T|\theta')|$. These numbers are known to be the number of $\theta$-special classes and the number of $\theta'$-special classes, respectively (Problem 11.10 of \cite{isa}).  By Problem 11.9 of \cite{isa}, it suffices to see that if $H\leq T$, then $\theta$ extends to $H$ if and only if $\theta'$ extends to $H$. 

Let $H\leq T$ and assume that $\theta$ extends to $H$. Our task is to see that $\theta'$ extends to $H$. Put $N=\Ker\theta$. Notice that $N$ is the direct product  of the simple groups $S_{ij}$ whose corresponding factor is the principal character. Let $M$ be the direct product of the remaining direct factors of $F$, so that $F=M\times N$. Note that since $\theta$ is $T$-invariant $M$ and $N$ are normal in $T$. In particular,  $M$ and $N$ are normal in $H$ too.
Write $\theta=\tilde{\theta}\times 1_N$. By Lemma \ref{ext}, $\tilde{\theta}$ extends to $H$ and by Lemma \ref{ext} again, so does the character obtained after the first step in the transformation from $\theta$ to $\theta'$ (note that $\overline{\theta}=\tilde{\theta}\times\varphi$ for some character $\varphi$ all of whose factors are either principal characters or $\alpha_i$ for some $i$). If we need to replace some factors of this character $\overline{\theta}$ by principal characters, then we can apply Lemma \ref{ext} for a third time to deduce that $\theta'$ extends to $H$. 
	
	Analogously, one can see that if $\theta'$ extends to $H$, then $\theta$ extends to $H$ too. This completes the proof.

(iv) Again, it follows from Lemma \ref{aut} and the fact that $G$ is a subgroup of $\Aut(F)$. 
\end{proof}

 If $N$ is a normal subgroup of a group $G$ and $\mathcal{T}\subseteq\Irr(N)$, we write 
$$\Irr(G|\mathcal{T})=\bigcup_{\varphi\in\mathcal{T}}\Irr(G|\varphi).$$
 Also, we write 
$$\acd(G|\mathcal{T})=\frac{\sum_{\chi\in\Irr(G|\mathcal{T})}\chi(1)}{|\Irr(G|\mathcal{T})|}$$
for the average of the degrees of the irreducible characters of $G$ that lie over characters in $\mathcal{T}$. Now, we complete the proof of Theorem A.

\begin{thm}
Let $G$ be a group with trivial solvable radical. Then $|G|$ is bounded (from above) in terms of $\acd(G)$. 
\end{thm}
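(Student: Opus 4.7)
The plan is to lower-bound $\acd(G)$ by a function of $D := \prod_{i=1}^t m(S_i)^{u_i/2}$ through a careful partition of the $G$-orbits on $\Irr(F)$, and then invoke Jordan's theorem (Lemma~\ref{jor}) to translate this into a bound on $|G|$.

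Let $\Omega$ denote the set of $G$-orbits on $\Irr(F)$. Partition $\Omega$ into three disjoint subsets as follows. Let $\Omega_B$ be the set of orbits whose representative $\theta$ has $\theta'$ defined. By Lemma~\ref{mam}(iv), the rule $\theta \mapsto \theta'$ descends to an \emph{injection} $\phi\colon \Omega_B \to \Omega$ whose image $\Omega_C := \phi(\Omega_B)$ is disjoint from $\Omega_B$; set $\Omega_A := \Omega \setminus (\Omega_B \cup \Omega_C)$. For $O \in \Omega_A$ the character $\theta_O'$ is undefined, so $\theta_O(1) \geq D$ by the displayed inequality preceding Lemma~\ref{mam}; for $O \in \Omega_C$ the representative has the form $\gamma'$ for some $\gamma$ with $\gamma'$ defined, and Lemma~\ref{mam}(i) gives $\theta_O(1) \geq D$. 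Moreover, Lemma~\ref{mam}(iii) guarantees that $\phi$ preserves the cardinality $|\Irr(G|\theta)|$.

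Partition $\Irr(G)$ by the unique $G$-orbit on $\Irr(F)$ that a given character lies over (Clifford), and set $n_X := \sum_{O \in \Omega_X} |\Irr(G|\theta_O)|$. Then $n_B = n_C$, $k(G) = n_A + 2 n_B$, and using $\chi(1) \geq \theta_O(1)$ on orbits in $\Omega_A \cup \Omega_C$ together with the trivial bound $\chi(1) \geq 1$ on $\Omega_B$,
$$\sum_{\chi \in \Irr(G)} \chi(1) \ \geq \ D\, n_A + n_B + D\, n_C \ = \ D(n_A + n_B) + n_B.$$
A short calculation shows that the right-hand side divided by $k(G) = n_A + 2 n_B$ is nondecreasing in $n_A$, hence is minimized at $n_A = 0$, yielding $\acd(G) \geq (D+1)/2$ and therefore $D < 2\,\acd(G)$.

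Finally, since $m(S_i) \geq 2$ for every nonabelian simple group, the estimate $\prod_i m(S_i)^{u_i/2} < 2\,\acd(G)$ bounds both $\sum_i u_i$ (hence $t$) and each individual $m(S_i)$ in terms of $\acd(G)$. Jordan's theorem (Lemma~\ref{jor}) then bounds each $|S_i|$, so $|F| = \prod_i |S_i|^{u_i}$ and, by Lemma~\ref{aut}, $|\Aut(F)|$ are bounded. Since $\sol(G) = 1$ forces $\bC_G(F) \leq F$ and $G \hookrightarrow \Aut(F)$, $|G|$ is bounded in terms of $\acd(G)$. The main obstacle is the orbit bookkeeping in the first step: one must apply Lemma~\ref{mam}(iv) to verify the disjointness $\Omega_B \cap \Omega_C = \emptyset$ and the injectivity of $\phi$, so that the lower bound from $\Omega_C$ compensates exactly for the unknown contribution from $\Omega_B$ through the matching $n_B = n_C$.
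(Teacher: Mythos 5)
Your proof is correct and takes essentially the same route as the paper: both pair each $G$-orbit with $\theta'$ defined against its image orbit under $\theta\mapsto\theta'$ (using Lemma~\ref{mam}(iii),(iv) for the matching and Lemma~\ref{mam}(i) plus the pre-lemma inequality for the degree lower bound $D$), deduce $\acd(G)\geq D/2$, and then invoke Lemma~\ref{jor} together with $G\hookrightarrow\Aut(F)$. Your $n_A,n_B,n_C$ bookkeeping and the monotonicity-in-$n_A$ observation is just a slightly more explicit version of the paper's remark that each block $\Irr(G|\{\theta_i,\theta_i'\})$ and each $\Irr(G|\theta_j)$ has average degree at least $D/2$.
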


\begin{proof}
Let $F$, as before, be the generalized Fitting subgroup of $G$. Since $G$ is isomorphic to a subgroup of $\Aut(F)$, it suffices to bound $|F|$.  For this, we want to see that $|S_i|$ and $u_i$ is bounded in terms of $\acd(G)$ for every $i$.

Let $\Delta=\{\theta_1,\dots,\theta_r\}\cup\{\theta_1',\dots,\theta_r'\}\cup\{\theta_{r+1},\dots,\theta_s\}$ be a complete system of representatives of the $G$-orbits on $\Irr(F)$,  where $\theta_i'$ is not defined for $i\in\{r+1,\dots,s\}$. Notice that such a complete system of representatives exists by Lemma \ref{mam}(iv). We have
$$
\Irr(G)=\bigcup_{i=1}^r(\Irr(G|\theta_i)\cup\Irr(G|\theta_i'))\bigcup_{j=r+1}^s\Irr(G|\theta_j)
$$
and these subsets form a partition of $\Irr(G)$. 

Notice that if $\chi\in\Irr(G|\theta_j)$ for some $j>r$ then 
$$
\chi(1)\geq\theta_j(1)\geq \prod_{i=1}^t m(S_i)^{u_i/2},
$$
where we have used the inequality that precedes Lemma \ref{mam}. 
On the other hand the average degree of the irreducible characters of $G$ that lie over $\mathcal{T}_i=\{\theta_i,\theta_i'\}$ (for every $i\leq r$) is
$$
\acd(G|\mathcal{T}_i)\geq\frac{\theta_i(1)+\theta_i'(1)}{2}\geq \frac{1}{2}\prod_{i=1}^t m(S_i)^{u_i/2},
$$
where we have used Lemma \ref{mam}(iii) together with Clifford theory in the first inequality and Lemma \ref{mam}(i) in the second inequality.

Putting everything together, we get 
$$
\acd(G)\geq  \frac{1}{2}\prod_{i=1}^t m(S_i)^{u_i/2}.
$$
If we now take $\acd(G)$ to be a fixed number, we deduce that $t$, $u_i$ and $m(S_i)$ are  bounded in terms of $\acd(G)$. Since $|S_i|$ is bounded in terms of $m(S_i)$ by Lemma \ref{jor}, the result follows.
\end{proof}

Note that our proof of Theorem A depends on the classification of finite simple groups by means of Lemma 4.2 of \cite{mor}. It would be interesting to find a classification-free proof.

\section{Bounding the index of the Fitting subgroup}

In this short section, we prove Theorem B. The following is the large orbit theorem that we will use. In the following, $\alpha=\frac{\log(6\cdot(24)^{1/3})}{\log 3}\approx2.595$. Recall that if a group $G$ acts on a module $V$, $b(G,V)$ is the size of the largest $G$-orbit on $V$.

\begin{thm}
\label{yan}
Suppose that a solvable group $G>1$ acts faithfully and completely reducibly on a module $V$ (of possibly mixed characteristic). Then $|G|<b(G,V)^{\alpha}$.
\end{thm}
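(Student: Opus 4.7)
The plan is to recognize this as the sharp large-orbit bound due to Yang \cite{yan}, refining earlier results of Gluck, Manz--Wolf, and Keller. The strategy is the now-standard induction scheme for solvable completely reducible linear groups: induct on $|G| + \dim V$ and split into the reducible, imprimitive irreducible, and primitive irreducible cases, tracking the constant $\alpha$ carefully at each step.

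First, if $V$ is reducible, write $V = V_1 \oplus V_2$ with both summands $G$-invariant. One cannot simply multiply orbit sizes because $G$ may act with nontrivial kernel on each summand; instead, pick $v_1 \in V_1$ in a large $G$-orbit, apply the inductive hypothesis to $\operatorname{Stab}_G(v_1)$ acting (still completely reducibly) on $V_2$, and bound the orbit of $(v_1, v_2)$ via $[G : \operatorname{Stab}_G(v_1, v_2)]$. In the imprimitive irreducible case, Clifford's theorem writes $V = \bigoplus_{i=1}^n W_i$ with $G$ permuting the $W_i$ transitively; if $H$ stabilizes $W_1$ and $H_0$ is its image on $W_1$, then $G \hookrightarrow H_0 \wr \fS_n$. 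Choosing vectors $w_i \in W_i$ in distinct large $H_0$-orbits gives $b(G, V) \geq b(H_0, W_1)^n / n!$, and combining with $|G| \leq |H_0|^n \cdot n!$ and the inductive bound for $H_0$ reduces matters to a numerical inequality that the value of $\alpha$ is precisely designed to satisfy.

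For the primitive irreducible case, Huppert's structure theorem restricts $G$ severely: there is a unique minimal normal subgroup $E$, either elementary abelian or of extraspecial type of order $p^{1+2a}$, with $G/E$ embedded in $\operatorname{Sp}_{2a}(p)$ (modulo a central scalar part). This reduces the problem to finitely many small explicit configurations per prime $p$, each handled by direct orbit computation. The extremal examples forcing the exact value $\alpha = \log(6 \cdot 24^{1/3})/\log 3$ arise from the action of small $\SL_2(3)$-type groups on $\FF_3^2$ fed into the wreath product reduction above; the specific constant $6 \cdot 24^{1/3}$ is exactly what pops out of optimizing this configuration.

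The main obstacle is the sharpness of the constant: verifying the bound for every primitive base case with no slack in the exponent requires a meticulous case-by-case orbit analysis, combined with careful bookkeeping through the wreath product induction so that the wreath estimate $|H_0|^n n! \leq (b^n/n!)^\alpha$ actually closes up. This balancing act is exactly the hard technical content of \cite{yan}, and realistically I would cite that paper rather than reproduce the full calculation here.
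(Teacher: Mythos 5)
Your proposal is correct and matches the paper: the paper's entire proof of this statement is a citation to Theorem 3.4 of \cite{yan}, and you ultimately do the same after sketching the standard reducible/imprimitive/primitive induction scheme underlying Yang's result. The sketch is a faithful high-level description of how such large-orbit theorems are proved, but the paper does not reproduce any of it.
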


\begin{proof}
This is Theorem  3.4 of \cite{yan}.
\end{proof}

Now, we show how to complete the proof of Theorem B. In the following we use the standard reasoning to get bounds for Gluck's conjecture from a large orbit theorem.

\begin{thm}
\label{b}
Let $G$ be a solvable group. Then there exists $\lambda\in\Irr(\bF(G))$ linear such that $|G:\bF(G)|\leq\acd(G|\lambda)^{\alpha}$.
\end{thm}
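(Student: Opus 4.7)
The plan is to combine Yang's orbit theorem (Theorem \ref{yan}) with Clifford's correspondence, in direct analogy with how existing bounds for Gluck's conjecture are derived from large-orbit theorems.

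Write $F=\bF(G)$; if $G=F$ the conclusion is trivial, so assume $G>F$. A classical result for solvable groups gives $C_G(F/\Phi(F))=F$, so $G/F$ acts faithfully on $V:=F/\Phi(F)$, and equivalently on the dual $\widehat{V}=\Irr(V)$, which we regard as the set of linear characters of $F$ that are trivial on $\Phi(F)$. Decomposing by primes, $V=\bigoplus_p F_p/\Phi(F_p)$ makes $\widehat{V}$ a $G/F$-module of mixed characteristic. A standard reduction --- either extracting a faithful completely reducible section of $F/\Phi(F)$, or passing to $G/\Phi(F)$ to make the Fitting subgroup abelian and then applying Maschke piece by piece --- puts us in the setting of Theorem \ref{yan}.

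Theorem \ref{yan} then gives $|G:F|<b(G/F,\widehat{V})^{\alpha}$. We choose $\lambda\in\widehat{V}$ in a largest $G$-orbit, so that $|G:I_G(\lambda)|=b(G/F,\widehat{V})$. Since $\lambda$ is linear and $F\trianglelefteq G$, Clifford's correspondence (Theorem~6.11 of \cite{isa}) shows that every $\chi\in\Irr(G|\lambda)$ is induced from a character of the inertia group $I_G(\lambda)$, so $\chi(1)\geq |G:I_G(\lambda)|$. Averaging over $\Irr(G|\lambda)$ yields $\acd(G|\lambda)\geq|G:I_G(\lambda)|=b(G/F,\widehat{V})$, and combining with Yang's bound gives $|G:F|\leq\acd(G|\lambda)^{\alpha}$, as desired.

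The main obstacle is verifying the hypotheses of Theorem \ref{yan}, specifically the complete reducibility of the $G/F$-action on $\widehat{V}$ in mixed characteristic. Faithfulness is immediate from the classical fact $C_G(F/\Phi(F))=F$ for solvable $G$, but complete reducibility can be delicate when $|G/F|$ shares a prime with $|F|$ and requires the usual technical lemmas on modules over solvable groups. Once the correct module is in hand, the orbit-to-character-degree step via Clifford theory and the averaging are entirely routine; the exponent $\alpha<2.596$ is inherited verbatim from Theorem \ref{yan}, which is precisely why the author notes that the argument just mimics the existing approaches to Gluck's conjecture.
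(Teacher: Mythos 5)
Your overall strategy is the same as the paper's: apply Yang's large-orbit theorem to a suitable $G/\bF(G)$-module built from $\bF(G)$, pick $\lambda$ in a largest orbit, and then use Clifford's correspondence to convert orbit size into a lower bound on all degrees in $\Irr(G|\lambda)$. The Clifford step and the averaging are exactly as in the paper.

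The one place where you deviate (and hand-wave) is the choice of module. You take $V=F/\Phi(F)$ and then gesture at ``extracting a faithful completely reducible section,'' but the paper avoids this issue entirely by taking $V=\bF(G)/\Phi(G)$, the quotient by the Frattini subgroup of $G$ rather than of $F$. For that choice, Gasch\"utz's theorem (Theorem 1.12 of \cite{mw}) says outright that $G/\bF(G)$ acts faithfully and completely reducibly on $V$, and Proposition 12.1 of \cite{mw} then transfers both properties to the dual action on $\Irr(V)$, so Theorem~\ref{yan} applies immediately. Since $\Phi(F)\le\Phi(G)$, your $F/\Phi(F)$ surjects onto $\bF(G)/\Phi(G)$, but it need not itself be completely reducible, so your ``standard reduction'' sentence is genuinely doing work that you do not carry out. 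Replacing $F/\Phi(F)$ by $\bF(G)/\Phi(G)$ and citing Gasch\"utz plus Proposition 12.1 of \cite{mw} makes the argument complete and coincides with the paper's proof.
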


\begin{proof}
By Gasch\"utz's theorem (Theorem 1.12 of \cite{mw}), $G/\bF(G)$ acts faithfully and completely reducibly on $V=\bF(G)/\Phi(G)$.  By Proposition 12.1 of \cite{mw}, the same holds for the action of $G/\bF(G)$ on $\Irr(V)$. Applying Theorem \ref{yan} to this action, we deduce that there exists $\lambda\in\Irr(V)$ such that the size of the $G/\bF(G)$-orbit of $\lambda$ is 
$$|G:I_G(\lambda)|\geq|G:\bF(G)|^{1/\alpha}.$$
By Clifford's correspondence (Theorem 6.11 of \cite{isa}), all the characters in  $\Irr(G|\lambda)$ are induced from irreducible characters of $I_G(\lambda)$. In particular, if $\chi\in\Irr(G|\lambda)$ then 
$$
\chi(1)\geq|G:I_G(\lambda)|\geq|G:\bF(G)|^{1/\alpha}.$$
It follows that 
$$\acd(G|\lambda)\geq|G:\bF(G)|^{1/\alpha},$$
as desired.
\end{proof}

The proof of the following strong form of Conjecture C for odd order groups is essentially identical, but we use  Theorem 3.2 of \cite{yanodd} rather thanTheorem 3.1.

\begin{thm}
Let $G$ be an odd order group. Then $|G:\bF(G)|\leq\acd(G|\lambda)^{1.643}$.
\end{thm}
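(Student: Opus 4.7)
The plan is to follow the proof of Theorem \ref{b} line for line, substituting the sharper large-orbit bound available for groups of odd order. First I would invoke Gasch\"utz's theorem (Theorem 1.12 of \cite{mw}) to see that $G/\bF(G)$ acts faithfully and completely reducibly on $V=\bF(G)/\Phi(G)$, and then Proposition 12.1 of \cite{mw} to transfer this to a faithful, completely reducible action on the dual $\Irr(V)$. Since $G$ has odd order, so does the acting group $G/\bF(G)$, and so do the underlying module and its dual, so the hypotheses of the odd-order large-orbit theorem are in place.

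Next, in place of Theorem \ref{yan} I would apply Theorem 3.2 of \cite{yanodd}, which for a solvable odd-order group acting faithfully and completely reducibly on $V$ guarantees $|G/\bF(G)| \leq b(G/\bF(G),\Irr(V))^{1.643}$. This yields a character $\lambda \in \Irr(V)$, which I regard as a linear character of $\bF(G)$ trivial on $\Phi(G)$, whose orbit has size
\[
|G:I_G(\lambda)| \geq |G:\bF(G)|^{1/1.643}.
\]

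Finally, by Clifford's correspondence (Theorem 6.11 of \cite{isa}), every $\chi \in \Irr(G|\lambda)$ is induced from a character of $I_G(\lambda)$, so $\chi(1) \geq |G:I_G(\lambda)|$. Averaging over $\Irr(G|\lambda)$ preserves this lower bound, giving $\acd(G|\lambda) \geq |G:\bF(G)|^{1/1.643}$, which rearranges to the stated inequality.

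The main (really the only) non-routine ingredient is the odd-order large-orbit theorem Theorem 3.2 of \cite{yanodd}; the rest of the argument is a purely formal repetition of the proof of Theorem \ref{b}. I would not expect any genuine obstacle beyond correctly citing the odd-order bound, since Gasch\"utz's theorem, the self-duality of the action on $\Irr(V)$, and Clifford's correspondence all carry over verbatim from the even-order case.
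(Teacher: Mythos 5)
Your proposal is correct and is exactly the argument the paper intends: the paper states that the proof is ``essentially identical'' to that of Theorem~\ref{b}, with Theorem~3.2 of \cite{yanodd} replacing the large-orbit theorem used there. Your step-by-step repetition of the Theorem~\ref{b} argument (Gasch\"utz, duality of the action on $\Irr(V)$, the odd-order large-orbit bound, Clifford correspondence) matches the paper's approach precisely.
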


As mentioned in the Introduction, all known results on Gluck's conjecture can be quickly adapted to give similar results on Conjecture C. Note also that we could add the condition that $\lambda$ has square-free order, i.e. $\lambda\in\Irr(\bF(G)/\Phi(G))$, in Conjecture C and in all the results in this section.

 \section{Examples}
 
 If $N\trianglelefteq G$, $\Irr(G|N)$ stands for the set of irreducible characters of $G$ whose kernel does not contain $N$. We write $\acd(G|N)$ to denote the average of the degrees of the characters in this set, i.e.,
 $$
 \acd(G|N)=\frac{\sum_{\chi\in\Irr(G|N)}\chi(1)}{|\Irr(G|N)|}.
 $$
 Our first example shows that $|G:\bF(G)|$ is not bounded from above in terms of $\acd(G|\bF(G))$.  This was our first attempt to refine Gluck's conjecture along the lines followed in this paper.
 
 \begin{exa}
 \label{ex1}
 \rm{Let $G=S_3\times F_p$, where $p$ is any odd prime and $F_p$ is the Frobenius group of order $(p-1)p$.  We have that $V=\bF(G)=\CCC_3\times\CCC_p$ and $G/\bF(G)\cong \CCC_2\times\CCC_{p-1}=H$. Note that $G=HV$.  Let $\lambda,\mu\in\Irr(V)$ with $o(\lambda)=3$ and $o(\mu)=p$. The action of $H$ on $\Irr(V)$ has three nontrivial orbits, with representatives $\lambda$, $\mu$ and $\lambda\times \mu$. Their inertia groups in $H$ are, respectively, $\CCC_{p-1}, \CCC_2$ and $1$. Since all of them are cyclic, any character in $\Irr(V)$ extends to its inertia group (by Corollary 11.22 of \cite{isa}). Using Clifford's correspondence and Gallagher's theorem (Corollary 6.17 of \cite{isa}), we deduce that $G$ has $p-1$ irreducible characters of degree $2$ lying over $\lambda$, $2$ irreducible characters of degree $p-1$ lying over $\mu$  and one irreducible character of degree $2(p-1)$ lying over $\lambda\times\mu$. These are all the characters in $\Irr(G|V)$. We deduce that
 $$
 \acd(G|V)=\frac{(p-1)2+2(p-1)+2(p-1)}{p+2}<6,
 $$
 but $|G:\bF(G)|=2(p-1)$ is arbitrarily large. Notice that $\Irr(G|V)$ coincides with the set of nonlinear irreducible characters of $G$.}
 \end{exa}
 
 As we have seen, the proof of Theorem B is a consequence of a large orbit theorem. It is known that Gluck's conjecture cannot be proven as an immediate consequence of a large orbit theorem. More precisely, there are examples of (even order) solvable groups $G$ acting faithfully and completely reducibly on finite modules $V$ without orbits of size at least $|G|^{1/2}$ (see Example 13 of \cite{wol}, for instance). Any counterexample to Gluck's conjecture, or to Conjecture C, should involve these actions. We show that the group in Example 13 of \cite{wol} is not a counterexample to Conjecture C.
 
 \begin{exa}
 \rm{
 Let $G=\SSS_4\wr\SSS_3$.  Recall that  $\SSS_4$ is the semidirect product of $\SSS_3$ acting faithfully and irreducibly on $W=C_2\times C_2$, so that $V=\bF(G)$ is elementary abelian of order $2^6$. Write $G=HV$ for some subgroup $H\cong G/\bF(G)$. As in Example 13 of \cite{wol}, the $G$-orbits in $\Irr(V)$ have size $1, 9, 27$ and $27$. Let $\mu\in\Irr(W)$ be nonprincipal so that $\lambda=\mu\times\mu\times\mu\in\Irr(V)$ lies in a $G$-orbit of size $27$. Note that $I_H(\lambda)=\CCC_2\wr\SSS_3$ and $I_G(\lambda)=\QQQ_8\wr\SSS_3$. By Problem 6.18 of \cite{isa},  $\lambda$ extends to its inertia subgroup in $G$. We  leave as an exercise to check that $I_H(\lambda)=I_G(\lambda)/V$ has $4$ linear characters,, $2$ irreducible characters of degree $2$ and $4$ irreducible characters of degree $3$. In particular, its average character degree is $2$. Using Gallagher's theorem, we conclude that  
$\acd(G|\lambda)=2\cdot27=2\cdot3^3$. Thus
$$|G:\bF(G)|=2^4\cdot3^4<2^2\cdot3^6=\acd(G|\lambda)^2,$$
as desired.

On the other hand, if $\varphi_1,\varphi_2$ and $\varphi_3$ are the three nonlinear irreducible characters of $\SSS_4$ (two of them of degree $3$ and the other one of degree $2$), then $\chi=(\varphi_1\times\varphi_2\times \varphi_3)^G\in\Irr(G)$. We have that $\chi(1)=108=b(G)$, while $\acd(G|\lambda)=54$. This example shows that even in the case when Gluck's conjecture does not follow from a large orbit theorem, there is perhaps room for improvement in the bound predicted by Gluck's conjecture.}
\end{exa}

 \section{Further remarks and questions}
 
 We start with a very fundamental question on the average character degree. It is even surprising that some results on $\acd(G)$ have been obtained without an answer to it.
 As mentioned in the Introduction, an affirmative answer to this question, together with Theorem A,  would imply that $|G:\sol(G)|$ is bounded from above in terms of $\acd(G)$. 
 
 \begin{que}
 Let $G$ be a finite group and $N\trianglelefteq G$. Is it true that $\acd(G/N)$ is bounded from above in terms of $\acd(G)$? Is it true that even $\acd(G/N)\leq\acd(G)$?
 \end{que}

Recall that it was proved in Theorem 9 of \cite{gr} that  $|G:\sol(G)|\leq\acs(G)^2$. We propose the following question.

\begin{que}
\label{1}
Let $G$ be a finite group. Is it true that $|G:\sol(G)|\leq\acd(G)^4$?
\end{que}

An interesting first step would be to achieve this bound for groups with trivial solvable radical. Using results from \cite{chmn}, it has been shown by N. N. Hung that this bound holds when $G$ is simple.

Gluck's conjecture was extended to arbitrary finite groups in Question 5 of \cite{chmn}, where it was asked whether $|G:\bF(G)|\leq b(G)^3$ for any finite group.  It is interesting to note that as a consequence of the Guralnick-Robinson inequality $|G:\bF(G)|\leq\acs(G)^2$, it was proved in \cite{chmn} that  $|G:\bF(G)|\leq b(G)^4$. As $\AAA_5$ shows, it is not true that if $G$ is a finite group then $|G:\sol(G)|\leq\acd(G)^3$. 
We do not know any counterexamples to the following question for arbitrary finite groups.

\begin{que}
\label{2}
Let $G$ be a finite group. Is it true that there exists $\lambda\in\Irr(\bF(G))$ linear such that $|G:\bF(G)|\leq\acd(G|\lambda)^4$?
\end{que}

We have seen in Example \ref{ex1} that, unlike for class sizes,  the index of the Fitting subgroup of a solvable group cannot in general be bounded in terms of $\acd(G)$. We have been unable to decide the answer to the following question, even in the $p$-group case (see Theorem 12 of \cite{gr} for the corresponding result for class sizes).

\begin{que}
Let $G$ be a solvable group. Is it true that the derived length of $G$ is bounded in terms of $\acd(G)$?
\end{que}


\begin{thebibliography}{99}

 \bibitem{col} 
 M. Collins,  On Jordan's theorem for complex linear groups, J. Group Theory {\bf 10} (2007), 411--423.

\bibitem{chmn} J. Cossey, Z. Halasi, A. Mar\'oti, H. N. Nguyen, On a conjecture of Gluck,  Math.  Z. {\bf 279} (2015), 1067-1080.


\bibitem{glu} D. Gluck, The largest irreducible character degree of a finite group, Can. J. Math. {\bf 37} (1985), 442-451.

\bibitem{gr} R. Guralnick, G. Robinson, On the commuting probability in finite groups, J. Algebra {\bf 300} (2006), 509-528. Addendum to paper: ``On the commuting probability in finite groups", J. Algebra {\bf 319} (2008), 1822.


\bibitem{gus} W. H. Gustafson,  What is the probability that two group elements commute?, Amer. Math. Monthly  {\bf 80} (1973), 1031-1034..

\bibitem{hun} N. N. Hung, Characters of $p'$-degree and Thompson's character degree theorem, Rev. Mat. Iberoam.  {\bf 33} (2017), 117-138.

\bibitem{ht1} N. N. Hung, P. H. Tiep, Irreducible characters of even degree and normal Sylow $2$-subgroups,  Math. Proc. Cambridge Philos. Soc.  {\bf 162} (2017), 353-365.

\bibitem{ht2} N. N. Hung, P. H. Tiep, The average character degree and an improvement on the It\^o-Michler theorem, J. Algebra {\bf 550} (2020), 86-107.



\bibitem{isa}
M. Isaacs,
``Character Theory of Finite Groups",
Dover,
New York,
1994.



\bibitem{ilm} M. Isaacs, M. Loukaki, A. Moret\'o,  The average degree of an irreducible character of a finite group, Israel J. Math.  {\bf 197} (2013), 55-67.

\bibitem{imn} M. Isaacs, G. Malle, G. Navarro, A reduction theorem for the McKay conjecture, Invent. Math. {\bf 170} (2007), 33-101.



\bibitem{ks}
H. Kurzweil, B. Stellmacher,
``The Theory of Finite Groups: An Introduction", Springer-Verlag, 2004.


\bibitem{lew} M. Lewis, Variations on average character degree and $p$-nilpotence, Israel J. Math.  {\bf 216} (2016), 749-764.

\bibitem{mt} K. Magaard, H. P. Tong-Viet, Character degree sums in finite nonsolvable groups J. Group Theory {\bf 14} (2011), 54-57.

\bibitem{mw} O. Manz, T. Wolf, ``Representations of Solvable Groups", Cambridge University Press, 1993. 

\bibitem{mor} A. Moret\'o, Complex group algebras of finite groups: Brauer's Problem 1, Adv. Math. {\bf 208} (2007), 236-248.

\bibitem{mn} A. Moret\'o, H. N. Nguyen, On the average character degree of finite groups, Bull. London Math. Soc. {\bf 46} (2014), 454-462.

\bibitem{mowo} A. Moret\'o, T. Wolf, Orbit sizes, character degrees and Sylow subgroups, Adv. Math. {\bf 184} (2004), 18-36.

\bibitem{qia} G. Qian, On the average character degree and the average class size in finite groups, J. Algebra {\bf 423} (2015), 1191-1212.

\bibitem{wol} T. Wolf, Large orbits of supersolvable linear groups, J. Algebra {\bf 215} (1999), 235-247.


\bibitem{yanodd} Y. Yang, Arithmetical conditions of orbit sizes of linear groups of odd order. Israel J. Math. {\bf 237} (2020), 1-14.

\bibitem{yan} Y. Yang, On Gluck's conjecture,  (2020) arXiv:2010.08073


\end{thebibliography}
\end{document}